\numberwithin{equation}{section} %%==수식번호인쇄형식 지정 ==
\theoremstyle{plain}
\newcommand{\C}{\mathbb C}
\newcommand{\R}{\mathbb R}
\newcommand{\Z}{\mathbb Z}
\newtheorem{thm}{Theorem}[section]
\newtheorem{lem}[thm]{Lemma}
\newtheorem{pro}[thm]{Proposition}
\newtheorem{rk}[thm]{Remark}
\begin{document}

\title[Volume preserving diffeomorphisms ]
{\bf Volume preserving diffeomorphisms with inverse shadowing }

\author{Manseob Lee
 }
\address
     { Manseob Lee :  Department of Mathematics \\
       Mokwon University, Daejeon, 302-729, Korea
      }
\email{lmsds@mokwon.ac.kr  }

\thanks{%$^*$ Corresponding Author.\\
 2000 {\it Mathematics Subject
Classification.}
37C05, 37C29,  37C20, 37C50. \\
\indent {\it Key words and phrases.} shadowing, inverse shadowing,
weak inverse shadowing,  hyperbolic, Anosov, volume-preserving.
\newline
\indent This research was supported by Basic Science Research
Program through the National Research Foundation of Korea(NRF)
funded by the Ministry of Education, Science and Technology (No.
2011-0007649).}

%%%%%%%%%%%%%%%%%%%%%%%%%%%%%%%%%%%%%%%%%%%%%%%%%%%%%%%%%%%%%%%%%%%%%%%%%%%%%%
\begin{abstract}
Let $f$ be a volume-preserving diffeomorphism of a closed
$C^{\infty}$ $n$-dimensional Riemannian manifold $M.$ In this
paper, we prove the equivalence between the following conditions:
\begin{itemize}
\item[(a)] $f$ belongs to the $C^1$-interior of the set of
volume-preserving diffeomorphisms which satisfy the inverse
shadowing property with respect to the continuous methods,

\item[(b)] $f$ belongs to the $C^1$-interior of the set of
volume-preserving diffeomorphisms which satisfy the weak inverse
shadowing property with respect to the continuous methods,
\item[(c)] $f$ belongs to the $C^1$-interior of the set of
volume-preserving diffeomorphisms which satisfy the orbital
inverse shadowing property with respect to the continuous methods,
\item[(d)] $f$ is Anosov.

\end{itemize}
\end{abstract}

%%%%%%%%%%%%%%%%%%%%%%%%%%%%%%%%%%%%%%%%%%%%%%%%%%%%%%%%%%%%%%%%%%%%%%%%%%%%%
\maketitle

%%%%%%%%%%%%%%%%%%%%%%%%%%%%%%%%%%%%%%%%%%%%%%%%%%%%%%%%%%%%%%%%%%%%%%%%
%\date{July 2, 2004}

\section{Introduction}

Let $M$ be a closed $C^{\infty}$ $n$-dimensional Riemannian
manifold, and let ${\rm Diff}(M)$ be the space of diffeomorphisms
of $M$ endowed with the $C^1$-topology. Denote by $d$ the distance
on $M$ induced from a Riemannian metric $\|\cdot\|$ on the tangent
bundle $TM$. Let $f:M\to M$ be a diffeomorphism, and let
$\Lambda\subset M$ be a closed $f$-invariant set.

 For $\delta>0$, a sequence of points
$\{x_i\}_{i=a}^{b}(-\infty\leq a< b \leq \infty)$ in $M$ is called
a {\it $\delta$-pseudo orbit} of $f$ if $d(f(x_i),
x_{i+1})<\delta$ for all $a\leq i\leq b-1.$
 We say that $f$ has the {\it shadowing property} on $\Lambda$ if for any
 $\epsilon>0$ there is $\delta>0$ such that for any
 $\delta$-pseudo orbit $\{x_i\}_{i\in\Z}\subset\Lambda$ of $f$ there is $y\in M$
 such that $d(f^i(y), x_i)<\epsilon,$ for $i\in\Z.$
Note that in this definition, the shadowing point $y\in M$ is not
necessarily contained in $\Lambda.$ We say that $f$ belongs to the
{\it $C^1$-interior shadowing property} if there is a
$C^1$-neighborhood $\mathcal{U}(f)$ of $f$ such that for any
$g\in\mathcal{U}(f)$, $g$ has the shadowing property.

The shadowing property usually plays an important role in the
investigation of stability theory and ergodic theory(\cite{P}).

Now, we introduce the notion of the inverse shadowing property
which is a "dual" notion of the shadowing property. Inverse
shadowing property was introduced by Corless and Pilyugin in
\cite{CP}, and the qualitative theory of dynamical systems with
the property was developed by various authors(see \cite{CLZ, CP,
DLH, KOP, Le, P1}. In this paper, we introduce the various inverse
shadowing property.

Let $M^{\Z}$ be the space of all two sided sequences
$\xi=\{x_n:n\in\Z\}$ with elements $x_n\in M,$ endowed with the
product topology. For a fixed $\delta>0$, let $\Phi_f(\delta)$
denote the set of all $\delta$-pseudo orbits of $f.$ A mapping
$\varphi: M\to \Phi_f(\delta)\subset M^{\Z}$ is said to be a {\it
$\delta$-method} for $f$ if $\varphi(x)_0=x,$ and each
$\varphi(x)$ is a $\delta$-pseudo orbit of $f$ through $x,$ where
$\varphi(x)_0$ denotes the 0-th component of $\varphi(x).$ For
convenience, write $\varphi(x)$ for $\{\varphi(x)_k\}_{k\in\Z}.$
The set of all $\delta$-methods for $f$ will be denoted by
$\mathcal{T}_0(f, \delta).$ Say that $\varphi$ is {\it continuous
$\delta$-method} for $f$ if $\varphi$ is continuous. The set of
all continuous $\delta$-methods for $f$ will be denoted by
$\mathcal{T}_c(f, \delta).$ If $g:M\to M$ is a homeomorphism with
$d_0(f, g)<\delta$ then $g$ induces a continuous $\delta$-method
$\varphi_g$ for $f$ by defining
$$\varphi_g(x)=\{g^n(x):n\in\Z\}.$$ Let $\mathcal{T}_h(f, \delta)$
denote the set of all continuous $\delta$-methods $\varphi_g$ for
$f$ which are induced by a homeomorphism $g:M\to M$ with $d_0(f,
g)<\delta,$ where $d_0$ is the usual $C^0$-metric.  Let
$\mathcal{T}_d(f, \delta)$ denote by the set of all continuous
$\delta$-methods $\varphi_g$ for $f$ which are induced by
$g\in{\rm Diff}(M)$ with $d_1(f, g)<\delta.$ Then clearly we know
that
$$\mathcal{T}_d(f)\subset\mathcal{T}_h(f)\subset\mathcal{T}_c(f)\subset
\mathcal{T}_0(f),$$
$\mathcal{T}_{\alpha}(f)=\bigcup_{\delta>0}\mathcal{T}_{\alpha}(f,
\delta), \alpha=0, c, h, d.$ Let $\Lambda$ be a closed
$f$-invariant set. Denote $f|_{\Lambda}$ by  the restriction of
$f$ to a subset $\Lambda$ of $M.$ We say that $f$ has the {\it
inverse shadowing property} on $\Lambda$ with respect to the class
$\mathcal{T}_{\alpha}(f), \alpha=0, c, h, d,$ if for any
$\epsilon>0$ there exists $\delta>0$ such that for any
$\delta$-method $\varphi\in\mathcal{T}_{\alpha}(f, \delta),$ and
for a point $x\in\Lambda$ there is a point $y\in M$ such that
$$d(f^k(x), \varphi_g(y)_k)<\epsilon,\quad k\in\Z.$$
We say that $f$ has the {\it weak inverse shadowing property} on
$\Lambda$ with respect to the class $\mathcal{T}_{\alpha}(f),
\alpha=0, c, h, d,$ if for any $\epsilon>0$ there exists
$\delta>0$ such that for any
 $\delta$-method $\varphi\in\mathcal{T}_{\alpha}(f, \delta)$ and
 any point $x\in \Lambda$ there is a point $y\in M$ such that
 $$\varphi(y)\subset B_{\epsilon}(\mathcal{O}_f(x)),$$ where
 $B_{\epsilon}(A)=\{x\in M: d(x, A)\leq\epsilon\}.$ If $\Lambda=M$ then $f$ has the inverse, weak inverse shadowing property with
respect to the class $\mathcal{T}_{\alpha}(f),\alpha=0, c, h, d.$

Note that if $f\in{\rm Diff}(M)$ has the inverse shadowing
property with respect to the class $\mathcal{T}_d(f)$ then by the
definition, it clearly, has the weak inverse shadowing property
with respect to the class $\mathcal{T}_d(f).$ We say that $f$ has
the {\it orbital inverse shadowing property} on $\Lambda$ with
respect to the class $\mathcal{T}_{\alpha}(f), \alpha=0, c, h, d,
$ if for any $\epsilon>0$ there is a $\delta>0$ such that for any
$\delta$-method $\varphi\in\mathcal{T}_{\alpha}(f, \delta)$ and a
point $x\in \Lambda$ there is a point $y\in M$ such that
$$\mathcal{O}_f(x)\subset
B_{\epsilon}(\varphi(y))\quad\mbox{and}\quad \varphi(y)\subset
B_{\epsilon}(\mathcal{O}_f(x)).$$

If $\Lambda=M$ then $f$ has the orbital inverse shadowing property
with respect to the class $\mathcal{T}_{\alpha}(f),\alpha=0, c, h,
d.$

Note that if $f$ has the inverse shadowing property with respect
to the class $\mathcal{T}_d(f),$ then it has the orbital inverse
shadowing property with respect to the class $\mathcal{T}_d(f).$
But, the converse does not holds. indeed, an irrational rotation
on the unit circle has the orbital inverse shadowing property but
does not have the inverse shadowing property with respect to the
class $\mathcal{T}_d(f).$ We say that $f$ belongs to the {\it
$C^1$-interior inverse(weak inverse, or orbital inverse) shadowing
property} with respect to the class $\mathcal{T}_{\alpha}(f),
\alpha=0, c, h, d,$ if there is a $C^1$-neighborhood
$\mathcal{U}(f)$ of $f$ such that for any $g\in\mathcal{U}(f)$,
$g$ has the inverse(weak inverse, or orbital inverse) shadowing
property with respect to the class $\mathcal{T}_{\alpha}(f),
\alpha=0, c, h, d.$

 Lee \cite{Le}, showed that  a diffeomorphism belongs to the
$C^1$-interior inverse shadowing property with respect to the
$\mathcal{T}_d(f)$ if and only if it is structurally stable. And
Pilyugin \cite{P1} proved that a diffeomorphism belongs to the
$C^1$-interior inverse shadowing property with respect to the
class $\mathcal{T}_c(f)$ if and only if it is structurally stable.
Thus we can restate the above facts as follows.

\begin{thm}\label{thm1}Let $f\in{\rm Diff}(M).$ A diffeomorphism $f$ belongs to the $C^1$-interior inverse
shadowing property with respect to the class
$\mathcal{T}_d(f)$[resp. $\mathcal{T}_c(f)$] if and only if it is
structurally stable.
\end{thm}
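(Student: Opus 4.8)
The plan is to obtain Theorem~\ref{thm1} directly from the two results quoted just above its statement: Lee's characterization \cite{Le} for the class $\mathcal{T}_d(f)$ and Pilyugin's characterization \cite{P1} for the class $\mathcal{T}_c(f)$. Each of these already asserts the stated equivalence for its respective method class, so the only genuine task is to record that both characterizations single out exactly the structurally stable diffeomorphisms. I would therefore present the theorem as a corollary of \cite{Le,P1}, and then spend the argument sketching the underlying mechanism so that both implications are transparent and the role of the method classes $\mathcal{T}_d(f)$ and $\mathcal{T}_c(f)$ is clear.

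For the implication that structural stability implies membership in the $C^1$-interior inverse shadowing property, I would exploit the fact that structural stability is itself a $C^1$-open condition: if $f$ is structurally stable there is a $C^1$-neighborhood $\mathcal{U}(f)$ all of whose elements are structurally stable, and hence each $g\in\mathcal{U}(f)$ is topologically conjugate to $f$. By Robbin--Robinson--Ma\~n\'e, structural stability is equivalent to Axiom A together with the strong transversality condition, and such systems are known to possess the inverse shadowing property with respect to $\mathcal{T}_d(f)$ and $\mathcal{T}_c(f)$. Since every $g\in\mathcal{U}(f)$ shares this property, $f$ lies in the $C^1$-interior, as required.

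For the converse, that robust inverse shadowing forces structural stability, I would follow a two-stage route. First, a $C^1$-perturbation argument in the spirit of Franks' lemma shows that robust inverse shadowing forces every periodic orbit to be hyperbolic, and in fact places $f$ in the $C^1$-interior $\mathcal{F}^1(M)$ of the set of diffeomorphisms whose periodic orbits are all hyperbolic; the idea is that a non-hyperbolic periodic orbit can be perturbed so as to create nearby method-generated orbits that no genuine orbit can trace to within a prescribed $\epsilon$. By the machinery underlying the stability conjecture (Ma\~n\'e, Hayashi), membership in $\mathcal{F}^1(M)$ yields Axiom A together with the no-cycle condition. The second stage upgrades no-cycles to strong transversality: a non-transverse intersection of stable and unstable manifolds can again be perturbed to violate inverse shadowing. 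With Axiom A and strong transversality in hand, Robbin--Robinson--Ma\~n\'e delivers structural stability.

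The main obstacle is this second stage of the converse, namely deriving strong transversality from robust inverse shadowing. Ruling out non-transverse intersections requires producing, after a $C^1$-perturbation that stays inside the given neighborhood, method-generated orbits whose branching or accumulation is genuinely incompatible with being shadowed by a single true orbit up to $\epsilon$, and controlling such a perturbation is the delicate point. Fortunately, for the precise statement of Theorem~\ref{thm1} this analysis has already been carried out in \cite{Le} and \cite{P1}, so the theorem follows simply by combining the two characterizations.
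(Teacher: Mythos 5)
Your proposal matches the paper exactly: the paper offers no independent proof of Theorem~\ref{thm1}, presenting it purely as a restatement of the characterizations in \cite{Le} (for $\mathcal{T}_d(f)$) and \cite{P1} (for $\mathcal{T}_c(f)$), which is precisely how you resolve it. Your additional sketch of the underlying mechanism is a reasonable outline of what those references do, but the operative step --- deferring to the two cited results --- is the same.
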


In \cite{CLZ} Choi, Lee and Zhang  showed that a diffeomorphism
belongs to the $C^1$-interior weak inverse shadowing property with
respect to the $\mathcal{T}_d(f)$ if and only if both Axiom A and
the no-cycle condition. Moreover, they proved that a
diffeomorphism belongs to the $C^1$-interior orbital inverse
shadowing property with respect to the class $\mathcal{T}_d(f)$ if
and only if both Axiom A and the strong transversal condition.
 From the above facts, we get
the follows.
\begin{thm}Let $f\in{\rm Diff}(M).$ If a diffeomorphism $f$ belongs to the $C^1$-interior
weak inverse shadowing property with respect to the class
$\mathcal{T}_d(f)$ then $f$ satisfying both Axiom A and the
no-cycle condition. Moreover, if $f$ belongs to the $C^1$-interior
orbital inverse shadowing property with respect to the class
$\mathcal{T}_d(f)$ then it is structurally stable.
\end{thm}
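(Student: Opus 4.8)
The plan is to derive both assertions directly from the two characterizations of Choi, Lee and Zhang recalled just above, combined in the second case with the classical structural stability theorem. Since each cited result is an ``if and only if'' statement, the forward implications needed here are immediate consequences, so the argument is essentially a matter of composing known equivalences rather than producing new dynamics.

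For the first assertion, I would simply invoke the equivalence that $f$ belongs to the $C^1$-interior weak inverse shadowing property with respect to $\mathcal{T}_d(f)$ if and only if $f$ satisfies both Axiom A and the no-cycle condition. Reading off the forward direction of this equivalence yields precisely the claim: membership in the $C^1$-interior of the weak inverse shadowing class (with respect to $\mathcal{T}_d(f)$) forces Axiom A together with the no-cycle condition.

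For the second assertion, the strategy has two steps. First, by the Choi--Lee--Zhang characterization, $f$ belonging to the $C^1$-interior orbital inverse shadowing property with respect to $\mathcal{T}_d(f)$ is equivalent to $f$ satisfying Axiom A and the strong transversality condition. Second, I would appeal to the Stability Theorem (Ma\~n\'e's solution of the $C^1$ structural stability conjecture, together with the sufficiency results of Robbin and Robinson), which asserts that $f$ is structurally stable if and only if it satisfies Axiom A and the strong transversality condition. Chaining these two equivalences gives that $f$ is structurally stable, as required.

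The proof is thus essentially a bookkeeping argument, and the only genuine input beyond the quoted $\mathcal{T}_d(f)$-characterizations is the Stability Theorem. The one point that warrants care, and which I regard as the main (if modest) obstacle, is verifying that the ``strong transversality condition'' appearing in the orbital inverse shadowing characterization is exactly the hypothesis used in the Stability Theorem, so that the two equivalences compose without a gap. Once that identification is confirmed, the conclusion follows formally.
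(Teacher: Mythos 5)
Your proposal is correct and takes essentially the same route as the paper: the theorem is deduced there directly from the Choi--Lee--Zhang characterizations, with the second assertion obtained by converting ``Axiom A plus strong transversality'' into structural stability via the Stability Theorem, exactly as you describe. Your version is merely slightly more explicit about that last identification than the paper's one-line derivation.
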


By the theorem, even though a diffeomorphism is contained in the
$C^1$-interior of the set of diffeomorphisms possessing the weak
inverse shadowing property with respect to the class
$\mathcal{T}_d(f)$, it does not necessarily satisfy the strong
transversality condition.

A periodic point $p$ of $f$ is {\it hyperbolic} if $Df^{\pi(p)}$
has eigenvalues with absolute values different of one, where
$\pi(p)$ is the period of $p.$ Denote by $\mathcal{F}(M)$ the set
of $f\in{\rm Diff}(M)$ such that there is a $C^1$-neighborhood
$\mathcal{U}(f)$ of $f$ such that for any $g\in\mathcal{U}(f),$
every $p\in P(g)$ is hyperbolic. It is proved that by Hayashi
\cite{H} that $f\in\mathcal{F}(M)$ if and only if $f$ satisfies
both Axiom A and the no-cycle condition.

Let $\Lambda$ be a closed $f\in{\rm Diff}(M)$-invariant set. We
say that $\Lambda$ is {\it hyperbolic} if the tangent bundle
$T_{\Lambda}M$ has a $Df$-invariant splitting $E^s\oplus E^u$ and
there exists constants  $C>0$ and $0<\lambda<1$ such that
$$\|D_xf^n|_{E_x^s}\|\leq C\lambda^n\;\;{\rm and}\;\;\|D_xf^{-n}|_{E_x^u}\|\leq C\lambda^{n} $$
for all $x\in \Lambda$ and $n\geq 0.$ If $\Lambda=M$ then we say
that $f$ is an {\it Anosov diffeomorphism}.

\section{Statement of the results}

A fundamental problem in differentiable dynamical systems is to
understand how a robust dynamic property on the underlying
manifold would influence the behavior of the tangent map on the
tangent bundle. For instance, in \cite{M}, Ma\~n\'e proved that
any $C^1$ structurally stable diffeomorphism is an Axiom A
diffeomorphism. And in \cite{Pa}, Palis extended this result to
$\Omega$-stable diffeomorphisms.

 Let $M$ be a compact $C^{\infty}$
$n$-dimensional Riemannian manifold endowed with a volume form
$\omega.$ Let $\mu$ denote the measure associated to $\omega,$
that we call Lebesgue measure, and let $d$ denote the metric
induced by the Riemannian structure. Denote by ${\rm
Diff}_{\mu}(M)$ the set of diffeomorphisms which preserves the
Lebesgue measure $\mu$ endowed with the $C^1$-topology. In the
volume preserving, the Axiom A condition is equivalent to the
diffeomorphism be Anosov, since $\Omega(f)=M$ by Poincar\'e
Recurrence Theorem. The purpose of this paper is to do this using
the robust property.

We define the set $\mathcal{F}_{\mu}(M)$ as the set of
diffeomorphisms $f\in{\rm Diff}_{\mu}(M)$ which have a
$C^1$-neighborhood $\mathcal{U}(f)\subset{\rm Diff}_{\mu}(M)$ such
that if for any $g\in\mathcal{U}(f)$, every periodic point of $g$
is hyperbolic. Note that
$\mathcal{F}_{\mu}(M)\subset\mathcal{F}(M)$(see \cite[Corollary
1.2]{AC}).

Very recently, Arbieto and Catalan \cite{AC} proved that if a
volume preserving diffeomorphism contained in
$\mathcal{F}_{\mu}(M)$ then it is Anosov. Indeed, the first they
used the Ma\~n\'e's results(\cite[Proposition II.1]{M}). Then they
showed that $\overline{P(f)}$ is hyperbolic. And, they proved that
nonwandering set $\Omega(f)=\overline{P(f)}$ by Pugh's closing
lemma. Finally, by Pincar\'e 's Recurrence Theorem, $\Omega(f)=M.$
 From the above facts, we can restate as follows.

\begin{thm}\label{thm1} Any diffeomorphism in
$\mathcal{F}_{\mu}(M)$ is Anosov.
\end{thm}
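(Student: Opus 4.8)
The plan is to follow the scheme indicated above (due to Arbieto--Catalan, via Ma\~n\'e): I will reduce the assertion that $f$ is Anosov to two separate facts, namely that the closure $\overline{P(f)}$ of the periodic set is hyperbolic and that it exhausts $M$. Once both hold, $M=\overline{P(f)}$ carries the required $Df$-invariant splitting $E^s\oplus E^u$ with uniform contraction and expansion, which is exactly the definition of an Anosov diffeomorphism.

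The first and main step is to prove that $\overline{P(f)}$ is hyperbolic. Since $f\in\mathcal{F}_{\mu}(M)$, there is a $C^1$-neighborhood $\mathcal{U}(f)\subset{\rm Diff}_{\mu}(M)$ in which every periodic point of every $g$ is hyperbolic. Invoking Ma\~n\'e's Proposition II.1 from \cite{M} in its volume-preserving form, this robustness upgrades to \emph{uniform} hyperbolicity of the periodic orbits of $f$: there are constants $C>0$ and $0<\lambda<1$, independent of the orbit and its period, controlling the rates along $E^s$ and $E^u$. The mechanism is a perturbation argument: if the stable and unstable behaviour of periodic points were not uniformly separated, a volume-preserving Franks-type lemma would let me perturb the derivative along a long periodic orbit, inside ${\rm Diff}_{\mu}(M)$, so as to create an eigenvalue of modulus one, contradicting $f\in\mathcal{F}_{\mu}(M)$. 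Passing to the closure and using continuity of the invariant cone fields, the uniform estimates produce a dominated splitting over $\overline{P(f)}$, and the uniform contraction of $E^s$ and expansion of $E^u$ persist in the limit, so $\overline{P(f)}$ is hyperbolic.

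It then remains to see that $\overline{P(f)}=M$. Because $f$ preserves the finite Borel measure $\mu$ of full support, Poincar\'e's Recurrence Theorem makes $\mu$-almost every point recurrent; recurrent points are nonwandering and dense, and $\Omega(f)$ is closed, so $\Omega(f)=M$. The volume-preserving version of Pugh's closing lemma then gives $\Omega(f)=\overline{P(f)}$: the inclusion $\overline{P(f)}\subseteq\Omega(f)$ is automatic, while the closing lemma, together with the robustness of $\mathcal{F}_{\mu}(M)$, shows that every nonwandering point is a limit of periodic points. Combining these two equalities yields $M=\Omega(f)=\overline{P(f)}$, which is hyperbolic by the previous step, so $f$ is Anosov.

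I expect the genuine difficulty to lie entirely in the first step. Extracting uniform hyperbolic constants from the purely qualitative robustness built into $\mathcal{F}_{\mu}(M)$, and then promoting the resulting dominated splitting to a hyperbolic one on the closure of the periodic set, is precisely where Ma\~n\'e's machinery and the volume-preserving perturbation (Franks-type) lemmas are indispensable; by contrast, the recurrence and closing-lemma steps are comparatively soft and follow once the correct volume-preserving statements are in place.
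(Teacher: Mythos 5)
Your proposal is correct and follows essentially the same route as the paper, which itself only sketches the Arbieto--Catalan argument: Ma\~n\'e's Proposition II.1 (in its volume-preserving form, via a conservative Franks-type lemma) to get uniform hyperbolicity of $\overline{P(f)}$, the volume-preserving closing lemma to get $\Omega(f)=\overline{P(f)}$, and Poincar\'e recurrence to get $\Omega(f)=M$. Your assessment that the real work lies in the first step is also accurate.
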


In \cite{L}, Lee showed that if a volume preserving
diffeomorphisms belongs to the $C^1$-interior expansive or
$C^1$-interior shadowing property, then it is Anosov. And
\cite{L1} proved that if a volume preserving diffeomorphisms
belongs to the $C^1$-interior weak shadowing property or
$C^1$-interior weak limit shadowing property, then it is Anosov.
Form this results,  we study the cases when a volume preserving
diffeomorphism $f$ is in $C^1$-interior various inverse shadowing
property with respect to the class $\mathcal{T}_d(f)$, then it is
Anosov. Let $int\mathcal{IS}_{\mu}(M)$ be denote the set of volume
preserving diffeomorphisms in ${\rm Diff}_{\mu}(M)$ satisfying the
inverse shadowing property with respect to the class
$\mathcal{T}_d$, and let $int\mathcal{WIS}_{\mu}(M)$[respect.
$int\mathcal{OIS}_{\mu}(M)$] be denote the set of
volume-preserving diffeomorphisms in ${\rm Diff}_{\mu}(M)$
satisfying the  weak inverse shadowing property with respect to
the class $\mathcal{T}_d$ [respect. the orbital inverse shadowing
property with respect to the class $\mathcal{T}_d$] . From now, we
only consider the class $\mathcal{T}_d$ when we mention the
inverse shadowing property; that is, the "inverse shadowing
property" implies the " inverse shadowing property with respect to
the class $\mathcal{T}_d$". Now we are in position to state the
theorem of our paper.

\begin{thm}\label{thm2} Let $f\in{\rm Diff}_{\mu}(M).$ We has that
$$int \mathcal{IS}_{\mu}(M)= int\mathcal{OIS}_{\mu}(M)=
int\mathcal{WIS}_{\mu}(M)=\mathcal{AN}_{\mu}(M),$$ where
$\mathcal{AN}_{\mu}(M)$ is the set of Anosov volume preserving
diffeomorphisms in ${\rm Diff}_{\mu}(M)$.
\end{thm}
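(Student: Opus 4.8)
The plan is to prove all four sets coincide by closing the cycle of inclusions
\[
\mathcal{AN}_{\mu}(M)\subseteq int\mathcal{IS}_{\mu}(M)\subseteq int\mathcal{WIS}_{\mu}(M)\subseteq\mathcal{AN}_{\mu}(M),
\]
together with $int\mathcal{IS}_{\mu}(M)\subseteq int\mathcal{OIS}_{\mu}(M)\subseteq\mathcal{AN}_{\mu}(M)$, after which every inclusion collapses to an equality. I would first dispatch the easy inclusion $\mathcal{AN}_{\mu}(M)\subseteq int\mathcal{IS}_{\mu}(M)$: if $f$ is Anosov then being Anosov is a $C^1$-open condition, so some $C^1$-neighborhood $\mathcal{U}(f)\subset{\rm Diff}_{\mu}(M)$ consists of Anosov, hence structurally stable, maps, and each such map has the inverse shadowing property with respect to $\mathcal{T}_d$ by Theorem \ref{thm1}; thus $f$ lies in the $\mu$-interior of the diffeomorphisms with inverse shadowing. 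The inclusions $int\mathcal{IS}_{\mu}(M)\subseteq int\mathcal{WIS}_{\mu}(M)$ and $int\mathcal{IS}_{\mu}(M)\subseteq int\mathcal{OIS}_{\mu}(M)$ are immediate, since, as recalled in the introduction, inverse shadowing implies both weak inverse and orbital inverse shadowing pointwise, and this survives the passage to $C^1$-interiors.

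The substance lies in the two remaining inclusions, and for these the strategy is to show that each hypothesis forces $f\in\mathcal{F}_{\mu}(M)$ and then to invoke the Arbieto--Catalan theorem that every diffeomorphism in $\mathcal{F}_{\mu}(M)$ is Anosov. I would argue by contradiction: assume $f\in int\mathcal{WIS}_{\mu}(M)$ (respectively $int\mathcal{OIS}_{\mu}(M)$) but $f\notin\mathcal{F}_{\mu}(M)$, so that in every $C^1$-neighborhood there is a volume-preserving $g$ with a non-hyperbolic periodic point $p$ of period $\pi$; choose such a $g$ inside the neighborhood $\mathcal{U}(f)$ on which the weak (respectively orbital) inverse shadowing holds robustly. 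Since $Dg^{\pi}(p)$ has an eigenvalue of modulus one, I would apply the volume-preserving version of Franks' lemma to perturb $g$, staying in $\mathcal{U}(f)\cap{\rm Diff}_{\mu}(M)$, to a map $g_1$ whose derivative along $\mathcal{O}(p)$ carries an eigenvalue exactly equal to $1$, $-1$, or $e^{\pm i\theta}$, and then to realize this spectrum as genuinely degenerate local dynamics: a small invariant circle through the orbit on which $g_1^{\pi}$ is conjugate to a rotation in the complex case, or a small arc of $\pi$-periodic points in the real case. This local picture yields, for arbitrarily small $\delta$, a continuous $\delta$-method $\varphi_{\tilde g}\in\mathcal{T}_d(g_1,\delta)$ induced by a volume-preserving $\tilde g$ close to $g_1$, together with a base point whose $\varphi_{\tilde g}$-orbit fills the circle or slides along the arc a definite distance $\epsilon_0$ away from every genuine $g_1$-orbit. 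This contradicts the weak (respectively orbital) inverse shadowing of $g_1$, so $f\in\mathcal{F}_{\mu}(M)$, whence $f$ is Anosov and both remaining inclusions follow.

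The main obstacle is precisely this local construction. In contrast to the dissipative setting, every perturbation must be carried out inside ${\rm Diff}_{\mu}(M)$, so the eigenvalue normalization and the creation of the invariant circle or of the arc of periodic points cannot be produced by an arbitrary Franks perturbation; one is forced to use the conservative Franks lemma together with a volume-preserving local model, and then to verify both that the resulting $\delta$-method genuinely lies in $\mathcal{T}_d$ and that its orbits escape every true orbit in the weak, respectively orbital, sense. Treating the three eigenvalue cases uniformly, and checking that the escape distance $\epsilon_0$ can be fixed independently of $\delta$, is where the care is concentrated.
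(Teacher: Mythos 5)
Your proposal is correct and follows essentially the same route as the paper: show each $C^1$-interior is contained in $\mathcal{F}_{\mu}(M)$ by a contradiction argument combining the conservative Franks lemma with a Moser-linearized local model and a translation-type volume-preserving perturbation inducing a $\mathcal{T}_d$-method that defeats the (weak, orbital) inverse shadowing, then invoke the Arbieto--Catalan theorem to conclude Anosov, and close the cycle using openness of the Anosov property together with structural stability and Theorem \ref{thm1}. The only cosmetic difference is that you organize the equalities as a cycle of inclusions and handle the complex eigenvalue case via an invariant circle, where the paper perturbs to make $D_pg^l(v)=v$ and reduces to the real case.
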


\section{Proof of Theorem \ref{thm2}}

 Let $M$ be a compact $C^{\infty}$
$n$-dimensional Riemannian manifold endowed with a volume form
$\omega$, and let $f\in{\rm Diff}_{\mu}(M).$
 To prove the results, we will use the following is the well-known
 Franks' lemma for the conservative case, stated and proved in
 \cite[Proposition 7.4]{BDP}.
 \begin{lem}\label{frank} Let $f\in {\rm Diff}^1_{\mu}(M)$, and
 $\mathcal{U}$ be a $C^1$-neighborhood of $f$ in ${\rm
 Diff}^1_{\mu}(M).$ Then there exist a $C^1$-neighborhood
 $\mathcal{U}_0\subset \mathcal{U}$ of $f$ and $\epsilon>0$ such
 that if $g\in\mathcal{U}_0$, any finite $f$-invariant set $E=\{x_1, \ldots,
 x_m\},$ any neighborhood $U$ of $E$ and any volume-preserving
 linear maps $L_j:T_{x_j}M\to T_{g(x_j)}M$ with
 $\|L_j-D_{x_j}g\|\leq\epsilon$ for all $j=1,\ldots, m,$ there is a
 conservative diffeomorphism $g_1\in\mathcal{U}$ coinciding with
 $f$ on $E$ and out of $U,$ and $D_{x_j}g_1=L_j$ for all $j=1,
 \ldots, m.$
\end{lem}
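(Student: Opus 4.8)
The plan is to reduce the global statement to a local, volume-preserving perturbation at each point of $E$ separately, and then to realize the prescribed derivatives by a modification supported in a small ball around each point. First I would use the finiteness of $E=\{x_1,\dots,x_m\}$ to shrink $U$ so that it becomes a disjoint union of chart neighborhoods $U_j\ni x_j$; since $E$ is invariant the images $g(x_j)$ again lie in $E$, so I can keep all images fixed and alter only the derivatives, which guarantees that the orbit structure on $E$ is undisturbed and that the perturbations at distinct points do not interfere. By Moser's theorem I would choose the charts at each $x_j$ and at each $g(x_j)$ so that in local coordinates the volume form $\omega$ becomes standard Lebesgue measure on $\R^n$; in these coordinates both $D_{x_j}g$ and the target $L_j$ become linear maps of determinant $1$. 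Writing $A=D_{x_j}g$ and $P=A^{-1}L_j$, this reduces the lemma to the local claim that, if $P\in SL(n,\R)$ is $C^1$-close to the identity, there is a volume-preserving diffeomorphism $P_{\mathrm{loc}}$ equal to $P$ on a ball $B_r(0)$, equal to the identity outside $B_{2r}(0)$, fixing $0$, and $C^1$-close to the identity. Indeed, setting $g_1=g\circ P_{\mathrm{loc}}$ in the chart (and $g_1=g$ elsewhere) gives $D_0 g_1=A\,P=L_j$, with $g_1$ fixing $x_j$ and coinciding with $g$ off $U$.

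The obvious candidate, the damped interpolation $P_{\mathrm{loc}}(x)=\rho(|x|)\,Px+\bigl(1-\rho(|x|)\bigr)\,x$ for a radial cut-off $\rho$ equal to $1$ on $B_r$ and $0$ off $B_{2r}$, is $C^1$-small but generally has $\det DP_{\mathrm{loc}}\neq 1$ on the transition annulus, so it fails to be volume-preserving. I would repair this by a Dacorogna--Moser correction. Its Jacobian $J=\det DP_{\mathrm{loc}}$ equals $1$ on $B_r$ and off $B_{2r}$, is $C^0$-close to $1$ everywhere because $P$ is close to the identity, and satisfies $\int_{B_{2r}}(J-1)=0$ since $P_{\mathrm{loc}}$ restricts to a diffeomorphism of $B_{2r}$ that is the identity on $\partial B_{2r}$. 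The Dacorogna--Moser theorem then yields a diffeomorphism $\Psi$ supported in $B_{2r}$, $C^1$-close to the identity, that absorbs this excess Jacobian, so that $\Psi\circ P_{\mathrm{loc}}$ is genuinely volume-preserving, fixes $0$, and still equals $P$ on a smaller ball. Equivalently, one can build $P_{\mathrm{loc}}$ explicitly from volume-preserving shears, using that $SL(n,\R)$ is generated by transvections $x\mapsto x+\ell(x)\,e_i$ (with $\ell$ independent of the $i$-th coordinate, so that each damped factor has Jacobian identically $1$) and that $P$ is near the identity.

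Finally I would patch the local constructions together and verify the quantitative claims. Because $M$ is compact, the charts can be chosen with uniformly bounded distortion, and the number of elementary factors needed is bounded in terms of $\dim M$ alone, so the $C^1$-size of each $g_1$ is controlled by $\epsilon$ times a universal constant; hence choosing $\mathcal{U}_0\subset\mathcal{U}$ and $\epsilon$ small enough forces $d_1(f,g_1)\le d_1(f,g)+d_1(g,g_1)$ to keep $g_1$ inside $\mathcal{U}$, while $g_1$ coincides with $g$ on $E$ and out of $U$ by construction.

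The main obstacle is precisely the step that makes the conservative Franks' lemma harder than its dissipative counterpart: producing a perturbation that is \emph{simultaneously} volume-preserving, $C^1$-small, and equal to the prescribed linear map on a full neighborhood of each point. Naive cut-off interpolations destroy the determinant-one condition, so one must either invoke the Dacorogna--Moser solvability of the prescribed-Jacobian equation with $C^1$ control, or exploit the generation of $SL(n,\R)$ by shears; securing the uniformity of $\mathcal{U}_0$ and $\epsilon$ over all admissible $g$, $E$, and $U$ is the remaining delicate point, and is handled by the compactness of $M$ together with the finiteness of $E$.
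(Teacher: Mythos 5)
The paper does not actually prove this lemma: it is quoted as \cite[Proposition 7.4]{BDP} (the conservative Franks' lemma of Bonatti--D\'{\i}az--Pujals), so there is no in-paper argument to compare against. Your outline follows the standard strategy behind that result --- localize at the finitely many points of $E$ in disjoint Moser charts where $\omega$ is Lebesgue, reduce to realizing a near-identity $P\in SL(n,\R)$ by a conservative diffeomorphism supported in a small ball, and observe that the naive damped interpolation is $C^1$-small but fails to preserve volume on the transition annulus. That is the right architecture and you correctly isolate the crux.

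Two of your repair steps, however, do not work as written. In the Dacorogna--Moser route you assert that the corrected map $\Psi\circ P_{\mathrm{loc}}$ ``still equals $P$ on a smaller ball''; this is not automatic, because a diffeomorphism whose prescribed Jacobian equals $1$ on $P(B_r)$ need not be the identity there. You must solve the prescribed-Jacobian problem on the annular region $B_{2r}\setminus P(B_r)$, where the integral condition still holds since $J\equiv 1$ on $B_r$, and then extend $\Psi$ by the identity across both boundary components; that localization is a genuine extra step, not a corollary of the theorem as you invoke it. In the shear alternative, the claim that each damped transvection $x\mapsto x+\rho(|x|)\,\ell(x)\,e_i$ (with $\ell$ independent of $x_i$) has Jacobian identically $1$ is false: the radial cut-off $\rho(|x|)$ does depend on $x_i$, so $\partial_i\bigl(\rho(|x|)\ell(x)\bigr)\neq 0$ on the annulus and the determinant is not $1$ there. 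Producing compactly supported conservative elementary factors requires a more careful device, e.g.\ time-one maps of compactly supported divergence-free vector fields or the two-plane decomposition used in \cite{BDP}. With these repairs the scheme goes through, and your handling of the uniformity of $\mathcal{U}_0$ and $\epsilon$ via compactness is fine.
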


\begin{rk}\label{moser} Let $f\in{\rm Diff}^1_{\mu}(M).$ From the Moser's
Theorem(see \cite{Mo}), there is a smooth conservative change of
coordinates $\varphi_x:U(x)\to T_xM$ such that $\varphi_x(x)=0,$
where $U(x)$ is a small neighborhood of $x\in M.$
\end{rk}
\begin{pro}\label{is} If $f\in int\mathcal{IS}_{\mu}(M),$ then
every periodic point of $f$ is hyperbolic.
\end{pro}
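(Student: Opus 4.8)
The plan is to argue by contradiction, exactly as in the non-conservative proofs that a $C^1$-robust shadowing-type property forces hyperbolicity of periodic points, but carrying the volume-preserving constraint through every perturbation by invoking the conservative Franks' lemma (Lemma \ref{frank}) and Moser's coordinates (Remark \ref{moser}). Suppose $f\in int\mathcal{IS}_\mu(M)$ and let $\mathcal{U}(f)\subset{\rm Diff}_\mu(M)$ be a $C^1$-neighborhood witnessing the interior condition; assume for contradiction that $f$ has a non-hyperbolic periodic point $p$ of period $\pi=\pi(p)$, so $D_pf^{\pi}$ has an eigenvalue $\lambda$ with $|\lambda|=1$.

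First I would normalize the neutral eigenvalue. If $\lambda=\pm1$ I pass to the power $f^{2\pi}$ to obtain a real eigenvalue $1$; if $\lambda=e^{\pm i\theta}$ is complex, I first apply the conservative Franks' lemma to replace $f$ by a conservative $g\in\mathcal{U}(f)$ for which the central eigenvalue is a rational rotation $e^{2\pi i l/m}$, and then pass to the power $g^{\pi m}$. In either case, after a further application of Lemma \ref{frank} I arrange a conservative $g\in\mathcal{U}(f)$ (still in $int\mathcal{IS}_\mu(M)$) and an integer $N$ such that $D_pg^{N}$ has eigenvalue $1$ along a central subspace. Using the conservative chart $\varphi_p$ of Remark \ref{moser}, I would then make $g^{N}$ coincide, on a small central arc (or disc) $\mathcal{I}_p$ through $p$, with the identity, so that $\mathcal{I}_p$ consists of points that are genuinely $N$-periodic for $g$ with neutral central behaviour and hyperbolic transverse behaviour.

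Next I would manufacture the $\delta$-method that defeats inverse shadowing. Fix $\epsilon_0$ comparable to the length of $\mathcal{I}_p$. Given $\delta>0$ from the inverse shadowing property of $g$, I construct a conservative diffeomorphism $h$ with $d_1(g,h)<\delta$ that agrees with $g$ outside a small neighborhood of $p$ but composes the return map with a small translation of size $c\ll\epsilon_0$ along the central direction, supported in that neighborhood. Since a translation preserves $\mu$ and, for $c$ small relative to the support radius, is $C^1$-close to the identity, such an $h$ lies in $\mathcal{U}(f)$ and induces $\varphi_h\in\mathcal{T}_d(g,\delta)$. The effect of the translation is a uniform central drift: every $h$-orbit, in both time directions, is pushed a definite distance along $\mathcal{I}_p$ and, together with the transverse hyperbolic behaviour, must leave $B_{\epsilon_0}(p)$ at some iterate.

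Finally I would derive the contradiction. Applying the inverse shadowing of $g$ to $x=p$ and $\varphi_h$, there would be $y\in M$ with $d(g^{k}(p),h^{k}(y))<\epsilon_0$ for all $k\in\Z$; in particular the subsequence $\{h^{kN}(y)\}_{k\in\Z}$ would remain within $\epsilon_0$ of $p$, which is impossible by the central drift of $h$. Hence $p$ cannot be non-hyperbolic, proving the proposition. The main obstacle I anticipate is the volume-preserving constraint: one cannot simply push points out of a ball by an arbitrary local perturbation, because an area-preserving self-map of a disc has a fixed point (Brouwer) and Poincar\'e recurrence obstructs making a neighborhood wandering. This is precisely why the perturbation must be a measure-preserving translation on a \emph{non-invariant} ball, and why the complex (elliptic) case must first be reduced to the eigenvalue-$1$ case via a power, since an elliptic rotation of a small disc retains orbits that never escape, whereas the central drift obtained after trivializing the rotation does not.
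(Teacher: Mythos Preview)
Your proposal is correct and follows essentially the same route as the paper: use the conservative Franks' lemma (Lemma \ref{frank}) together with Moser coordinates (Remark \ref{moser}) to linearize near the non-hyperbolic periodic point, obtain a central arc on which the (power of the) map is the identity, then build a $\mathcal{T}_d$-method $h$ that is a small volume-preserving translation along the central direction and derive a contradiction with inverse shadowing. The only cosmetic differences are that the paper tests the inverse shadowing property at a point $x$ on the central arc with $d(p,x)=2\epsilon_1$ rather than at $x=p$, and in the complex case the paper directly perturbs $D_pg$ so that some power is the identity on the central subspace rather than first rationalizing the rotation and then passing to a power; neither changes the substance of the argument.
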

\begin{proof}Take $f\in int\mathcal{IS}_{\mu}(M),$ and
$\mathcal{U}(f)$ a $C^1$-neighborhood of $f\in
int\mathcal{E}_{\mu}(M).$ Let $\epsilon>0$ and
$\mathcal{V}(f)\subset\mathcal{U}_0(f)$ corresponding number and
$C^1$-neighborhood given by Lemma \ref{frank}. To derive a
contradiction, we may assume that there exists a nonhyperbolic
periodic point $p\in P(g)$ for some $g\in\mathcal{V}(f).$ To
simplify the notation in the proof, we may assume that $g(p)=p.$
Then there is at least one eigenvalue $\lambda$ of $D_pg$ such
that $|\lambda|=1.$

 By making use of the Lemma \ref{frank}, we
linearize $g$ at $p$ with respect to Moser's Theorem; that is, by
choosing $\alpha>0$ sufficiently small we construct $g_1$
$C^1$-nearby $g$ such that
$$g_1(x)=\left\{%
\begin{array}{ll}
    \varphi^{-1}_p\circ D_pg\circ \varphi_p(x) &\mbox{if} \quad x\in B_{\alpha}(p), \\
    g(x) & \mbox{if}\quad x\notin B_{4\alpha}(p). \\
\end{array}%
\right.$$ Then $g_1(p)=g(p)=p.$

First, we may assume that $\lambda\in\R$. Let $v$ be the
associated non-zero eigenvector such that $\|v\|=\alpha/4.$ Then
we can get a small arc $\mathcal{I}_v=\{tv:-1\leq t\leq 1\}\subset
\varphi_p(B_{\alpha}(p)).$ Take $\epsilon_1=\alpha/8$. Let
$0<\delta<\epsilon_1$ be the number of the inverse shadowing
property of $g_1$ for $\epsilon_1.$ Then by our construction of
$g_1,$ $\varphi^{-1}_p(\mathcal{I}_v)\subset B_{\alpha}(p).$ Put
$\mathcal{J}_p=\varphi_p(\mathcal{I}_v).$ For the above
$\delta>0$, we can define $\mathcal{T}_d(g_1)$-method as follows;
Let $h\in{\rm Diff}_{\mu}(M)$ such that $$h(x)=(x_1+\delta,
Ax')\quad\mbox{and}\quad h^{-1}(x)=(x_1+\delta, A^{-1}x'),$$ where
$x'=(x_2,\ldots, x_n)$ and $A$ corresponding to $|\lambda|\not=1.$
Clearly, $d_1(g_1, h)<\delta,$ and $h\in\mathcal{T}_d(g_1).$ Let
$p=0.$ Then choose $x=(x_1, 0, \ldots, 0)\in\mathcal{J}_p$ such
that $d(0, x)=2\epsilon_1.$ Then $$d(h^i(0), g_1^i(x))=d(0,
x)=2\epsilon_1.$$ Thus $g_1$ does not have the inverse shadowing
property.

We take a point $y=(y_1, 0\ldots, 0)\in\mathcal{J}_p$ such that
$d(y, x)<\epsilon_1.$ Then $d(g^i(x), h^i(y))=d(x, y_1+i\delta)$
and for some $k\in\Z$
$$d(x, y_1+k\delta)>\alpha/8=\epsilon_1.$$ Thus $g_1$ does not have
the inverse shadowing property.

 Therefore, we can choose a point
$y\in M\setminus\mathcal{J}_p$ such that $d(x, y)<\epsilon_1.$
Since $d(g_1^i(x), h^i(y))=d((x_1,0\ldots, 0),
(y_1+i\delta,A^ix'))$ or

$d((x_1, 0,\ldots,0), h^i(x))=((x_1, 0,\ldots,0),
(x_1+i\delta,(A^{-1})^ix')),$ we can find $k\in\Z$ such that
\begin{align*}&d((x_1, 0,\ldots,0),
(x_1+k\delta,A^kx'))>\epsilon_1\quad\mbox{or}\\& d((x_1,
0,\ldots,0), (x_1+k\delta,(A^{-1})^kx'))>\epsilon_1. \end{align*}
Thus $g_1$ does not have the inverse shadowing property. This is a
contradiction since $f\in int\mathcal{IS}_{\mu}(M).$

Finally,  if $\lambda\in\C,$ then to avoid the notational
complexity, we may assume that $g(p)=p.$ As in the first case, by
Lemma \ref{frank}, there are $\alpha>0$ and $g_1\in
\mathcal{V}(f)$ such that $g_1(p)=g(p)=p$ and

$$g_1(x)=\left\{%
\begin{array}{ll}
    \varphi^{-1}_p\circ D_pg\circ \varphi_p(x) &\mbox{if} \quad x\in B_{\alpha}(p), \\
    g(x) & \mbox{if}\quad x\notin B_{4\alpha}(p). \\
\end{array}%
\right.$$

With a $C^1$-small modification of the map $D_pg$, we may suppose
that there is $l>0$(the minimum number) such that $D_pg^l(v)=v$
for any $v\in\varphi_p(B_{\alpha}(p))\subset T_pM.$ Then, we can
go on with the previous argument in order to reach the same
contradiction. Thus, every periodic point of $f\in
int\mathcal{IS}_{\mu}(M)$ is hyperbolic.
\end{proof}

\begin{pro}\label{wis} If $f\in int\mathcal{WIS}_{\mu}(M)$ then
every periodic point of $f$ is hyperbolic.
\end{pro}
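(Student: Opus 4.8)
The plan is to mirror the structure of Proposition \ref{is}, since the weak inverse shadowing property differs from the inverse shadowing property only in how closeness to the orbit is measured. Suppose for contradiction that some $g$ in a $C^1$-neighborhood $\mathcal{V}(f)$ has a nonhyperbolic periodic point $p$; after replacing $g$ by an iterate and normalizing we may assume $g(p)=p$ with an eigenvalue $\lambda$ of $D_pg$ satisfying $|\lambda|=1$. As before I would invoke Lemma \ref{frank} together with Moser's Theorem (Remark \ref{moser}) to produce $g_1$, $C^1$-close to $g$, that is linear in Moser coordinates on a ball $B_{\alpha}(p)$ and equals $g$ outside $B_{4\alpha}(p)$, so that $g_1(p)=p$ and $D_pg_1=D_pg$ near $p$.

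The heart of the argument is again to build an explicit $\mathcal{T}_d(g_1)$-method that no single point can weakly inverse-shadow. Taking first $\lambda\in\R$, I would fix the eigenvector $v$ with $\|v\|=\alpha/4$, form the arc $\mathcal{I}_v=\{tv:-1\le t\le 1\}$ and its image $\mathcal{J}_p=\varphi_p^{-1}(\mathcal{I}_v)\subset B_{\alpha}(p)$, set $\epsilon_1=\alpha/8$, and let $\delta$ be the constant from the weak inverse shadowing property for $\epsilon_1$. Exactly as in Proposition \ref{is} I would choose a diffeomorphism $h$ with $h(x)=(x_1+\delta,Ax')$, $x'=(x_2,\dots,x_n)$ and $A$ the contracting/expanding part for the coordinates with $|\cdot|\neq 1$, so that $d_1(g_1,h)<\delta$ and $h\in\mathcal{T}_d(g_1)$. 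The key geometric point is that under $h$ the first coordinate drifts by $\delta$ at each step, so the $h$-trajectory of any $y$ leaves every $\epsilon_1$-neighborhood of the finite periodic orbit $\mathcal{O}_{g_1}(p)$; in the weak formulation one must show $\varphi(y)=\{h^k(y)\}_{k\in\Z}\not\subset B_{\epsilon_1}(\mathcal{O}_{g_1}(p))$ for every $y$, which is what fails the weak inverse shadowing property and yields the contradiction with $f\in int\mathcal{WIS}_{\mu}(M)$.

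For the complex case $\lambda\in\C$, I would again apply Lemma \ref{frank} to obtain the locally linearized $g_1$, and then with a $C^1$-small perturbation of $D_pg$ arrange a minimal $l>0$ with $D_pg^l(v)=v$ for all $v\in\varphi_p(B_{\alpha}(p))$, reducing to a situation where a small invariant disc of rationally rotating orbits plays the role of the arc $\mathcal{J}_p$; the drift construction then applies verbatim. The main obstacle I anticipate is verifying the \emph{uniform} escape from $B_{\epsilon_1}(\mathcal{O}_{g_1}(p))$ for all starting points $y\in M$, not merely those on $\mathcal{J}_p$: the weak property allows the shadowing point $y$ to lie anywhere, so one must confirm that the constant first-coordinate drift of $h$ forces every orbit out of the $\epsilon_1$-tube around the finite orbit of $p$, using that $\mathcal{O}_{g_1}(p)$ is a finite set contained in a bounded region of the Moser chart. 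Once that uniform-escape estimate is in hand the remainder is the same bookkeeping as in Proposition \ref{is}.
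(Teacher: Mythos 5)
Your proposal follows essentially the same route as the paper's own proof: a contradiction via the conservative Franks lemma and Moser coordinates, a locally linearized $g_1$ with an arc (or, in the complex case, a periodic disc) of $g_1$-fixed points, and the drifting method $h(x)=(x_1+\delta,Ax')$ whose orbits cannot remain in an $\epsilon_1$-neighborhood of the single-point orbit of a fixed point on the arc. The only cosmetic difference is that you test the weak inverse shadowing condition at $p$ itself rather than at a nearby point $x\in\mathcal{J}_p$ with $d(p,x)=2\epsilon_1$ as the paper does, and the uniform-escape issue you flag is resolved exactly as you indicate, since $\varphi_h(y)_0=y$ already forces any candidate $y$ into the $\epsilon_1$-ball where the drift estimate applies.
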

\begin{proof}Take $f\in int\mathcal{WIS}_{\mu}(M),$ and
$\mathcal{U}(f)$ a $C^1$-neighborhood of $f\in
int\mathcal{WIS}_{\mu}(M).$ Let $\epsilon>0$ and
$\mathcal{V}(f)\subset\mathcal{U}_0(f)$ corresponding number and
$C^1$-neighborhood given by Lemma \ref{frank}. To derive a
contradiction, we may assume that there exists a nonhyperbolic
periodic point $p\in P(g)$ for some $g\in\mathcal{V}(f).$ To
simplify the notation in the proof, we may assume that $g(p)=p.$
Then as in the proof of Proposition \ref{is}, we can take
$\alpha>0$ sufficiently small, and a smooth map $\varphi_p:
B_{\alpha}(p)\to T_pM$. Then we can make an arc
$\mathcal{J}_p\subset B_{\alpha}(p)$ and for some
$g_1\in\mathcal{V}(f)$. Take $\epsilon_1=({\rm
length}{\mathcal{J}_p})/4.$ Let $0<\delta<\epsilon_1$ be the
number of the weak inverse shadowing property of $g_1$ for
$\epsilon_1.$ Then we can construct a map $h\in{\rm Diff}(M)$ as
in the proof of Proposition\ref{is}. Let $p=0.$ Then choose a
point $x=(x_1, 0,\ldots, 0)\in\mathcal{J}_p$ such that $d(0,
x)=2\epsilon_1.$ Since $g_1$ has the weak inverse shadowing
property,
$$\mathcal{O}_h(0)\subset B_{\epsilon_1}(\mathcal{O}_{g_1}(x)).$$
However, for any $y\in\mathcal{J}_p,$
$$g_1^i(y)=y\quad\mbox{and}\quad h^i(y)=(y_1+i\delta, A^iy'),$$
where $y'=(y_2, \ldots, y_n).$ Thus it easily see that
$$\mathcal{O}_h(0)\not\subset
B_{\epsilon_1}(\mathcal{O}_{g_1}(x)).$$

If $y=(y_1, 0,\ldots, 0)\in\mathcal{J}_p\setminus\{p\}$, then
$$h^k(y)=(y_1+k\delta, A^ky')=(y_1+k\delta, 0).$$
Thus we know that
$$\mathcal{O}_h(y)\not\subset B_{\epsilon_1}(\mathcal{O}_{g_1}(x)).$$
This is a contradiction.

 Finally, we can choose a point $y\in
M\setminus\mathcal{J}_p$ such that $d(x, y)<\epsilon_1.$ Then we
know that
$$h^k(y)=(y_1+k\delta, A^ky').$$ Therefore, $$h^l(y)\not\in
B_{\epsilon_1}(\mathcal{J}_p),$$ for some $l\in\Z.$ Then
$$\mathcal{O}_h(y)\not\subset
B_{\epsilon_1}(\mathcal{O}_{g_1}(x))=B_{\epsilon_1}(x).$$ Thus
$g_1$ does not have the weak inverse shadowing property. This is a
contradiction.

If $\lambda\in\C,$ then as in the proof of Proposition \ref{is},
for $g_1\in\mathcal{V}(f),$ we can take $l>0$ such that
$D_pg^l_1(v)=v$ for any $v\in\varphi_p(B_{\alpha}(p))\subset
T_pM.$ Then from the previous argument in order to reach the same
contradiction. Thus, every periodic point of $f\in
int\mathcal{WIS}_{\mu}(M)$ is hyperbolic.

Consequently, if $f\in int\mathcal{WIS}_{\mu}(M)$ then
$f\in\mathcal{F}_{\mu}(M).$

\end{proof}

\begin{pro}\label{ois} If $f\in int\mathcal{OIS}_{\mu}(M)$ then
every periodic point of $f$ is hyperbolic.
\end{pro}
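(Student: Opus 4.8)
The plan is to follow the construction of Proposition \ref{wis} almost verbatim, relying on the observation that the orbital inverse shadowing property is \emph{stronger} than the weak inverse shadowing property: by definition, orbital inverse shadowing requires in particular the containment $\varphi(y)\subset B_{\epsilon}(\mathcal{O}_{g_1}(x))$, which is exactly the condition whose failure was exploited in Proposition \ref{wis}. Consequently, once a method $h$ is produced for which no tracking point $y$ can keep $\mathcal{O}_h(y)$ inside an $\epsilon$-neighborhood of $\mathcal{O}_{g_1}(x)$, the orbital inverse shadowing property fails as well, and the same contradiction is reached.

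Concretely, I would take $f\in int\mathcal{OIS}_{\mu}(M)$ with a $C^1$-neighborhood $\mathcal{U}(f)$, and let $\epsilon>0$ and $\mathcal{V}(f)\subset\mathcal{U}(f)$ be the number and neighborhood furnished by Lemma \ref{frank}. Assuming for contradiction that some $g\in\mathcal{V}(f)$ has a nonhyperbolic periodic point $p$ with (after simplification) $g(p)=p$, there is an eigenvalue $\lambda$ of $D_pg$ with $|\lambda|=1$. Using Lemma \ref{frank} together with the Moser coordinates of Remark \ref{moser}, I would linearize $g$ at $p$ to obtain $g_1\in\mathcal{V}(f)$ agreeing with $\varphi_p^{-1}\circ D_pg\circ\varphi_p$ on $B_{\alpha}(p)$, construct the arc $\mathcal{J}_p\subset B_{\alpha}(p)$, and define the volume-preserving drift $h(x)=(x_1+\delta,Ax')$ with $h\in\mathcal{T}_d(g_1)$, exactly as in Propositions \ref{is} and \ref{wis}. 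Here $0<\delta<\epsilon_1$ is the constant of orbital inverse shadowing for $\epsilon_1=({\rm length}\,\mathcal{J}_p)/4$, and we fix $x=(x_1,0,\ldots,0)\in\mathcal{J}_p$ with $d(0,x)=2\epsilon_1$.

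The contradiction then comes from the drift in the first coordinate. Since $g_1$ fixes the arc, $\mathcal{O}_{g_1}(x)$ is a bounded set, whereas $h^i(y)=(y_1+i\delta,A^iy')$ is unbounded in the first coordinate for every $y$. Splitting into the three cases $y\in\mathcal{J}_p$, $y$ on the fixed axis, and $y\in M\setminus\mathcal{J}_p$ near $x$, the same estimates as in Proposition \ref{wis} show that for some $k\in\Z$ the point $h^k(y)$ escapes $B_{\epsilon_1}(\mathcal{O}_{g_1}(x))$, so $\mathcal{O}_h(y)\not\subset B_{\epsilon_1}(\mathcal{O}_{g_1}(x))$. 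This already violates one of the two containments required by orbital inverse shadowing, contradicting $g_1\in\mathcal{V}(f)\subset int\mathcal{OIS}_{\mu}(M)$. The complex eigenvalue case $\lambda\in\C$ is handled, as before, by passing to a power: a $C^1$-small perturbation yields the least $l>0$ with $D_pg_1^l(v)=v$ on $\varphi_p(B_{\alpha}(p))$, after which the real-case argument applies.

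The step I expect to require the most care is confirming that the drift map $h$ genuinely lies in $\mathcal{T}_d(g_1)$ in this setting, namely that it is volume preserving with $d_1(g_1,h)<\delta$, and that $\mathcal{O}_{g_1}(x)$ remains bounded so that only the second containment must be broken; the first containment $\mathcal{O}_{g_1}(x)\subset B_{\epsilon_1}(\mathcal{O}_h(y))$ plays no role and may be ignored. Alternatively, one may bypass the construction entirely by noting that the orbital inverse shadowing property implies the weak inverse shadowing property, so that $int\mathcal{OIS}_{\mu}(M)\subset int\mathcal{WIS}_{\mu}(M)$, and invoking Proposition \ref{wis} directly.
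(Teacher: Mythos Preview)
Your proposal is correct and matches the paper's own proof essentially verbatim: the paper explicitly says the argument is ``almost the same'' as Proposition~\ref{wis}, repeats the linearization, the arc $\mathcal{J}_p$, and the drift $h$, and obtains the contradiction by showing $\mathcal{O}_h(y)\not\subset B_{\epsilon_1}(\mathcal{O}_{g_1}(x))$ for every candidate $y$, thereby violating one of the two containments in the orbital inverse shadowing definition. Your closing observation that $int\mathcal{OIS}_{\mu}(M)\subset int\mathcal{WIS}_{\mu}(M)$, so that Proposition~\ref{wis} could be invoked directly, is a legitimate shortcut that the paper does not take.
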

\begin{proof} The proof is almost the same that of Proposition
\ref{wis}. Indeed, let $f\in int\mathcal{OIS}_{\mu}(M),$ and
$\mathcal{U}(f)$ a $C^1$-neighborhood of $f\in
int\mathcal{OIS}_{\mu}(M).$ Let $\epsilon>0$ and
$\mathcal{V}(f)\subset\mathcal{U}_0(f)$ corresponding number and
$C^1$-neighborhood given by Lemma \ref{frank}. To derive a
contradiction, we may assume that there exists a nonhyperbolic
periodic point $p\in P(g)$ for some $g\in\mathcal{V}(f).$ To
simplify the notation in the proof, we may assume that $g(p)=p.$
Then as in the proof of Proposition \ref{wis}, we can take
$\alpha>0$ sufficiently small, and a smooth map $\varphi_p:
B_{\alpha}(p)\to T_pM$. Then we can make an arc
$\mathcal{J}_p\subset B_{\alpha}(p)$ and for some
$g_1\in\mathcal{V}(f)$. Take $\epsilon_1=({\rm
length}{\mathcal{J}_p})/4.$ Let $0<\delta<\epsilon_1$ be the
number of the orbital inverse shadowing property of $g_1$ for
$\epsilon_1.$ Then we can construct a map $h\in{\rm Diff}(M)$ as
in the proof of Proposition\ref{wis}. Let $p=0.$ Then choose a
point $x=(x_1, 0,\ldots, 0)\in\mathcal{J}_p$ such that $d(0,
x)=2\epsilon_1.$ Since $g_1$ has the orbital inverse shadowing
property,
\begin{align*}\mathcal{O}_h(0)\subset
B_{\epsilon_1}(\mathcal{O}_{g_1}(x))\quad\mbox{and}\quad\mathcal{O}_{g_1}(x)\subset
B_{\epsilon_1}(\mathcal{O}_h(0)).\end{align*} However, for any
$y\in\mathcal{J}_p,$
$$g_1^i(y)=y\quad\mbox{and}\quad h^i(y)=(y_1+i\delta, A^iy'),$$
where $y'=(y_2, \ldots, y_n).$ Thus it easily see that
$$\mathcal{O}_h(0)\not\subset
B_{\epsilon_1}(\mathcal{O}_{g_1}(x)).$$ Thus $g_1$ does not have
the orbital inverse shadowing property.

If $y=(y_1, 0,\ldots, 0)\in\mathcal{J}_p\setminus\{p\}$, then
$$h^k(y)=(y_1+k\delta, A^ky')=(y_1+k\delta, 0).$$
Thus we know that
$$\mathcal{O}_h(y)\not\subset B_{\epsilon_1}(\mathcal{O}_{g_1}(x)).$$
This is a contradiction.

 Finally, we can choose a point $y\in
M\setminus\mathcal{J}_p$ such that $d(x, y)<\epsilon_1.$ Then we
know that
$$h^k(y)=(y_1+k\delta, A^ky').$$ Therefore, $$h^l(y)\not\in
B_{\epsilon_1}(\mathcal{J}_p),$$ for some $l\in\Z.$ Then
$$\mathcal{O}_h(y)\not\subset
B_{\epsilon_1}(\mathcal{O}_{g_1}(x))=B_{\epsilon_1}(x).$$ Thus
$g_1$ does not have the orbital inverse shadowing property. This
is a contradiction.

If $\lambda\in\C,$ then as in the proof of Proposition \ref{wis},
for $g_1\in\mathcal{V}(f),$ we can take $l>0$ such that
$D_pg^l_1(v)=v$ for any $v\in\varphi_p(B_{\alpha}(p))\subset
T_pM.$ Then from the previous argument in order to reach the same
contradiction. Thus, every periodic point of $f\in
int\mathcal{OIS}_{\mu}(M)$ is hyperbolic.
\end{proof}

%==================================================================================================
\bigskip
\bigskip

\bigskip

\end{document}